\theoremstyle{definition}
\newtheorem{definition}{Definition}[section]
\theoremstyle{plain}
\newtheorem{lemma}[definition]{Lemma}
\newtheorem{theorem}[definition]{Theorem}
\newtheorem{proposition}[definition]{Proposition}
\newtheorem{corollary}[definition]{Corollary}
\theoremstyle{remark}
\newtheorem{remark}[definition]{Remark}
\newcommand{\mycl}{\operatorname{cl}}
\newcommand{\myint}{\operatorname{int}}
\newcommand{\mybd}{\operatorname{bd}}
\begin{document}

\title[Invariance of domain]{Invariance of domain in locally o-minimal structures}
\author[M. Fujita]{Masato Fujita}
\address{Department of Liberal Arts,
	Japan Coast Guard Academy,
	5-1 Wakaba-cho, Kure, Hiroshima 737-8512, Japan}
\email{fujita.masato.p34@kyoto-u.jp}

\begin{abstract}
	Definable continuous injective maps defined on definable open sets into the Euclidean spaces of the same dimension are open maps in definably complete locally o-minimal expansions of ordered groups.
\end{abstract}

\subjclass[2020]{Primary 03C64}

\keywords{Local o-minimality; Invariance of domain}

\thanks{The author was supported by JSPS KAKENHI Grant Number JP25K07109.}

\maketitle

\section{Introduction}\label{sec:intro}

Our aim is to prove the following theorem:

\begin{theorem}[Invariance of domain]\label{thm:open_map}
	Let $\mathcal M=(M,<,+,0,\ldots)$ be a definably complete locally o-minimal expansion of an ordered group.
	Let $V$ be a definable open subset of $M^n$ and $f:V \to M^n$ be a definable continuous injective map.
	Then $f$ is an open map; that is, $f(U)$ is open whenever $U$ is an open subset of $V$.
\end{theorem}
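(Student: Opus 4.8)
The plan is to prove the theorem by induction on $n$, leaning on the dimension theory and definable compactness available for definably complete locally o-minimal expansions of ordered groups, with the monotonicity theorem as the base case. First I would reduce to a local statement: since $f$ is open as soon as $f(U')$ is open for every open box $U' \subseteq V$, and since translations together with the ordering make the closed boxes around a point a neighbourhood basis, it suffices to fix $a \in V$ and a closed box $\bar B \subseteq V$ with $a \in B := \myint \bar B$ and to show $f(a) \in \myint f(B)$. I will use three standard facts for this class of structures: definable injective maps preserve dimension; a definable subset of $M^n$ has dimension $n$ if and only if it has nonempty interior; and definable continuous images of closed bounded definable sets are closed and bounded. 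Hence $f|_{\bar B}$ is a homeomorphism onto the closed bounded set $f(\bar B)$, the set $\Sigma := f(\mybd B)$ is closed, bounded and $(n-1)$-dimensional, and $f(a) \notin \Sigma$ by injectivity. Moreover, local definable cell decomposition shows that $f$ is already an open map on a definable dense open subset of $B$---on the top-dimensional cells of a compatible decomposition it is a homeomorphism onto an open cell---so the set $S := \{x \in B : f \text{ is not open at } x\}$ is definable of dimension at most $n-1$, and the theorem reduces to proving $S = \emptyset$.

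Next I would dispose of the point-set topology. Let $W$ be the definably connected component of $f(a)$ in the open set $M^n \setminus \Sigma$, which has finitely many components by definable completeness and local cell decomposition. From $\overline{f(B)} \subseteq f(\bar B) = f(B) \sqcup \Sigma$ the set $f(B) \cap W$ is relatively closed in $W$; it is nonempty, and since the good points of $B$ accumulate at $a$ its relative interior meets $W$. The relative frontier of $f(B) \cap W$ in $W$ consists of points $f(x)$ with $x \in B$ and $f(x) \notin \myint f(B)$, hence is contained in $f(S)$ and has dimension at most $\dim S$. Therefore, if $\dim S \le n - 2$, then removing it keeps $W$ definably connected, which forces $W \setminus f(B) = \emptyset$, so $f(a) \in W \subseteq f(B)$ and $f(a) \in \myint f(B)$. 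Thus the whole theorem comes down to the dimension estimate $\dim S \le n - 2$.

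This estimate is the crux and the step I expect to be the main obstacle. I would establish it by the induction on $n$. The base case $n = 1$ is the monotonicity theorem: a definable continuous injective function on an open subset of $M$ is locally strictly monotone, hence open, so $S = \emptyset$. For $n \ge 2$, assume for contradiction that $\dim S = n - 1$ and pick, using local cell decomposition, a point $x_0$ at which $S$ is locally an $(n-1)$-dimensional cell; after permuting coordinates---the only linear operation needed, and one available in an expansion of an ordered group---this cell is the graph of a continuous function over an open box $\Omega \subseteq M^{n-1}$. Identifying the cell with $\Omega$ through this graph and composing the injective continuous restriction of $f$ with a coordinate projection $\pi : M^n \to M^{n-1}$, while using the monotonicity theorem both to keep the resulting map injective near $x_0$ and to control $\pi_n \circ f$ transverse to the cell, one obtains a definable continuous injective map between open subsets of $M^{n-1}$; the inductive hypothesis makes it open at the image of $x_0$, and a fibered-homeomorphism argument in the spirit of the cell decomposition theorem (base handled by the inductive hypothesis, fibre by the monotonicity theorem) then shows that $f$ itself is open at $x_0$, contradicting $x_0 \in S$. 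The genuine work sits here: preserving injectivity after projecting out a coordinate---which is what dictates the adapted choice of coordinates and the appeals to monotonicity---and reassembling the $(n-1)$-dimensional conclusion into the $n$-dimensional one. By contrast the reductions above are essentially formal, as they must be, since invariance of domain is not a formal consequence of continuity and injectivity alone.
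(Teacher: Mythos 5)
There are two genuine gaps here, and together they sink the proposal. First, your argument leans repeatedly on (local) definable cell decomposition: to show that $f$ is open on a dense open subset of $B$ (``on the top-dimensional cells of a compatible decomposition it is a homeomorphism onto an open cell''), to get finitely many definably connected components of $M^n \setminus \Sigma$, and to locate the $(n-1)$-dimensional cell at $x_0$ in the inductive step. But cell decomposition is precisely what is \emph{not} available in definably complete locally o-minimal structures --- the paper says so explicitly in the introduction, and the whole point of the argument there is to get by with only the tame behaviour of the dimension function (\cite[Proposition 2.8]{FKK}) and the local graph-selection lemma \cite[Lemma 3.5]{Fuji2}. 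Some of your uses can be repaired (e.g.\ density of $f^{-1}(\myint f(B))$ in $B$ follows from dimension preservation under injective definable maps), but others cannot: knowing $f(x)\in\myint f(B)$ does not make $f$ open \emph{at} $x$, so even the claim $\dim S\le n-1$ for your set $S$ of non-openness points is not established, and finiteness of components of $M^n\setminus\Sigma$ is not a theorem in this setting.

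Second, and more fundamentally, the crux of your plan --- the estimate $\dim S \le n-2$, proved by induction on $n$ --- is not actually carried out, and the sketch you give does not work. Composing $f$ with a coordinate projection $\pi:M^n\to M^{n-1}$ destroys injectivity in general, and the monotonicity theorem controls one variable at a time, not the interaction needed to recover injectivity of $\pi\circ f$ near $x_0$; moreover, even granting an open map between subsets of $M^{n-1}$, there is no mechanism offered for ``reassembling'' this into openness of $f$ at $x_0$ in $M^n$. This is exactly the step where invariance of domain resists naive induction on dimension --- if it did not, the classical theorem would not need homology or degree theory. The paper avoids this entirely: it sets $H=f(U)\cap\mybd(f(U))$, shows $H$ is open in $\mybd(f(U))$ and that $\dim H=n-1$ (note: it does \emph{not} try to push the bad set down to dimension $n-2$), picks a graph piece of $f^{-1}(H)$ splitting a small open set into two sides $O_1,O_2$, and derives a contradiction from the fact that the two disjoint images $f(O_1),f(O_2)$ must each absorb one side of a neighbourhood of a point of $H$, forcing that point into $\myint f(U)$. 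Your connectedness lemma in the second paragraph (a box minus a closed set of dimension $\le n-2$ stays definably connected) is the same tool as the paper's Proposition 2.2, but you apply it to a dimension estimate you cannot prove, whereas the paper applies it to force $\dim H=n-1$ and then exploits the two-sidedness of a codimension-one graph.
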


We recall several previous works.
A classical invariance of domain and its proof are found in books on algebraic topology such as  \cite{Dold}.
The notion of homology is often used for the proof.
Woreheide proved the above theorem when the structure $\mathcal M$ is an o-minimal expansion of an ordered field using the theory of definable homology \cite{Woerheide}.
Johns proved that Woreheide's result holds under a relaxed condition in \cite{Johns} without using the notion of homology.
He assumed that $\mathcal M$ is an o-minimal structure and used the definable cell decomposition theorem.
Pierzcha{\l}a proved the same result independently of Johns in \cite{P}.
Dinh and Ph\d{a}m gave necessary and sufficient conditions for continuous maps definable in an o-minimal expansion of the real field to be open in \cite{DP}. 
They use Brouwer degree of a definable map in their proof, and the Brouwer degree of a definable map is unavailable in our setting. 

Several notions similar to o-minimality have been proposed.
We treat definably complete local o-minimality \cite{Miller,TV} among them.
An expansion of a dense linear order without endpoints $\mathcal M=(M,<,\ldots)$ is \textit{definably complete} if any definable subset $X$ of $M$ has the supremum and  infimum in $M \cup \{\pm \infty\}$ \cite{Miller}.
We say that $\mathcal M$ is \textit{locally o-minimal} if, for every definable subset $X$ of $M$ and for every point $a\in M$, we can choose an open interval $I$ containing the point $a$ so that $X \cap I$ is a union of a finite set  and finitely many open intervals \cite{TV}.
We fix a definably complete locally o-minimal expansion of an ordered group $\mathcal M$ in this paper.

Definable cell decomposition theorem is unavailable in the setting of Theorem \ref{thm:open_map}.
The author and his collaborators \cite{Fuji2,FKK} found that the dimension function behaves very tamely in definably complete locally o-minimal structures similarly to tame behavior of dimension function in o-minimal structures \cite[Chapter 4]{vdD}.
We prove the theorem using this tame behavior of dimension function.

We introduce the terms and notations used in this paper. 
Throughout, the term ‘definable’ means ‘definable in the given structure with parameters’.
We assume that $M$ is equipped with the order topology induced from the linear order $<$ and the topology on $M^n$ is the product topology of the order topology on $M$.
An open interval is a set of the form $(b_1,b_2):=\{x \in M\;|\; b_1<x<b_2\}$ for some $b_1,b_2 \in M \cup \{\pm \infty\}$ and open box is the product of open intervals.
We define a closed interval similarly and it is denoted by $[b_1,b_2]$.
For a subset $A$ of $ M^n$, $\myint(A)$, $\mycl(A)$, $\partial(A):=\mycl(A) \setminus A$ and $\mybd(A):=\mycl(A) \setminus \myint(A)$ denote the interior, the closure, the frontier and the boundary of $A$, respectively.

\section{Proof of the theorem}\label{sec:proof}

We begin to prove Theorem \ref{thm:open_map}.
Let $X$ be a definable subset of $M^n$.
We first recall the definition of $\dim X$.
The dimension of $X$ is the maximal nonnegative integer $d$ such that $\pi(X)$ has a nonempty interior for some coordinate projection $\pi:M^n \rightarrow M^d$.
We consider that $M^0$ is a singleton equipped with the trivial topology.
We set $\dim(X)=-\infty$ when $X$ is an empty set.
Tame properties of dimension function in definably complete locally o-minimal structures are summarized in \cite[Proposition 2.8]{FKK}.
Recall that a definable set $X$ is called \textit{definably connected} if it is not decomposed into two disjoint nonempty definable open subsets of $X$.

We say that $X$ is \textit{definably path connected} if, for every two points $x,y$ in $X$, there exists a definable continuous map $\gamma:[a,b] \to X$ such that $\gamma(a)=x$ and $\gamma(b)=y$.
Let $x,y$ be two points in $X$.
We say that $y$ is \textit{definably reachable} from $x$ in $X$ if there exist a natural number $m$ and definable continuous maps $\gamma_i:[a_i,b_i] \to X$ from nonempty bounded closed intervals $[a_i,b_i]$ for $1 \leq i \leq m$ such that $\gamma_1(a_1)=x$, $\gamma_m(b_m)=y$ and $\gamma_i(b_i)=\gamma_{i+1}(a_{i+1})$ for $1 \leq i <m$.
Definable reachability is obviously an equivalence relation in $X$.
In addition, when the structure is an expansion of an ordered group like our cases, $X$ is definably path connected if and only if every point $y$ in $X$ is definably reachable from an arbitrary point $x$ in $X$.
The following lemma is obvious and we omit the proof.

\begin{lemma}\label{lem:reachable}
	A definable set $X$ is definably connected if every point $y$ in $X$ is definably reachable from an arbitrary point $x$ in $X$.
\end{lemma}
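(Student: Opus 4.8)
The plan is to argue by contradiction in the standard way; the only ingredient that is not purely formal is that bounded closed intervals are definably connected, and that is precisely where definable completeness enters. Suppose $X$ is not definably connected, so $X = A \sqcup B$ with $A, B$ disjoint, nonempty, definable, and open in $X$ (equivalently both clopen in $X$). Fix $x \in A$ and $y \in B$. By hypothesis $y$ is definably reachable from $x$, so there are definable continuous maps $\gamma_i : [a_i,b_i] \to X$ for $1 \leq i \leq m$ with $\gamma_1(a_1) = x$, $\gamma_m(b_m) = y$, and $\gamma_i(b_i) = \gamma_{i+1}(a_{i+1})$ for $1 \leq i < m$.

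First I would record two elementary facts. (i) Each $[a_i,b_i]$ is definably connected: if $[a,b] = P \sqcup Q$ with $P, Q$ nonempty definable relatively open and $a \in P$, then $s := \sup\{t \in [a,b] : [a,t] \subseteq P\}$ exists in $M$ by definable completeness, and openness of $P$ forbids $s \in P$ while openness of $Q$ forbids $s \in Q$, a contradiction. (ii) The continuous image of a definably connected definable set is definably connected: if $\gamma_i([a_i,b_i]) = P \sqcup Q$ with $P, Q$ definable and relatively open, then $\gamma_i^{-1}(P)$ and $\gamma_i^{-1}(Q)$ partition $[a_i,b_i]$ into definable relatively open sets, so one of them is empty.

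Now set $Y_i := \gamma_i([a_i,b_i])$. By (i) and (ii), each $Y_i$ is a definably connected subset of $X = A \sqcup B$; since $Y_i \cap A$ and $Y_i \cap B$ are definable, relatively open in $Y_i$, and partition it, one of them is empty, i.e. $Y_i \subseteq A$ or $Y_i \subseteq B$. The point $\gamma_i(b_i) = \gamma_{i+1}(a_{i+1})$ lies in both $Y_i$ and $Y_{i+1}$, so consecutive $Y_i$ lie in the same one of $A, B$. Since $x = \gamma_1(a_1) \in Y_1$ and $x \in A$, induction on $i$ gives $Y_i \subseteq A$ for all $i$, whence $y = \gamma_m(b_m) \in Y_m \subseteq A$, contradicting $y \in B$. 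Therefore no such decomposition exists and $X$ is definably connected.

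I do not anticipate a genuine obstacle: the statement is elementary, which is why the paper calls it obvious. The only point needing care — and the reason the reachability hypothesis is phrased using bounded closed intervals rather than arbitrary definable domains — is that definable completeness is exactly what makes each $[a_i,b_i]$ definably connected, so that no single link of the chain can straddle the separation $A \sqcup B$.
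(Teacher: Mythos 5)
Your argument is correct and is exactly the standard one the paper has in mind --- the paper states this lemma without proof, calling it obvious, and your chain-of-definably-connected-images argument (with definable completeness yielding connectedness of $[a_i,b_i]$) is the intended filling-in. The only micro-point to tidy is the interval step: if $s=b$ and $s\in P$ the contradiction comes from $Q$ being forced empty rather than from openness of $P$, a trivial patch that does not affect the proof.
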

%
%

We next prove the following simple and important proposition:
\begin{proposition}\label{prop:connected}
	Let $B$ be a nonempty open box in $M^n$ and $X$ be a definable closed subset of $B$ of dimension $<n-1$.
	Then the definable set $B \setminus X$ is definably connected.
\end{proposition}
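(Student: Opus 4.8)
The plan is to prove, by induction on $n$, the formally stronger statement that, for every nonempty open box $B\subseteq M^n$ and every definable closed $X\subseteq B$ with $\dim X<n-1$, any two points of $B\setminus X$ are definably reachable from each other in $B\setminus X$; by Lemma~\ref{lem:reachable} this yields the proposition. The case $n=1$ is immediate: then $\dim X<0$ forces $X=\emptyset$, so $B\setminus X=B$ is an open interval and any two of its points are joined by a single segment.

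For the inductive step, write $B=B'\times I$ with $B'$ a nonempty open box in $M^{n-1}$ and $I$ an open interval, let $\rho\colon M^n\to M$ and $\pi'\colon M^n\to M^{n-1}$ be the projections onto the last, respectively the first $n-1$, coordinates, and for $t\in I$ put $X_t=\{x'\in B'\mid (x',t)\in X\}$, a definable closed subset of $B'$. First I would record two consequences of the dimension theory of \cite[Proposition~2.8]{FKK}. The exceptional set $D=\{t\in I\mid \dim X_t\geq n-2\}$ satisfies $\dim D\leq 0$: applying the fibre-dimension inequality to $X\cap\rho^{-1}(D)$ gives $\dim D+(n-2)\leq\dim(X\cap\rho^{-1}(D))\leq\dim X\leq n-2$. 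Hence $D$ has empty interior in $M$, and for every $t\in I\setminus D$ one has $\dim X_t\leq n-3<(n-1)-1$, so the induction hypothesis applies to $X_t\subseteq B'$ and any two points of $B'\setminus X_t$ are definably reachable from each other. Moreover $\dim\pi'(X)\leq\dim X<n-1$, so $B'\setminus\pi'(X)$ is nonempty, and for any point $r'$ in it the whole fibre $\{r'\}\times I$ is disjoint from $X$.

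Now let $p=(p',s)$ and $q=(q',u)$ be two points of $B\setminus X$. Since $\{t\in I\mid (p',t)\in X\}$ is closed in $I$ and omits $s$, the connected component $J$ of $s$ in its complement is a definable open interval with $s\in J\subseteq I$ and $(\{p'\}\times J)\cap X=\emptyset$; as $D$ has empty interior we may choose $s_0\in J\setminus D$, and the segment from $(p',s)$ to $(p',s_0)$ lies in $\{p'\}\times J\subseteq B\setminus X$. Symmetrically we obtain a height $u_0\in I\setminus D$ and a segment from $(q',u_0)$ to $q$ lying in $B\setminus X$. Fix $r'\in B'\setminus\pi'(X)$. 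Since $X_{s_0}\subseteq\pi'(X)$ and $(p',s_0)\notin X$, the points $p'$ and $r'$ both lie in $B'\setminus X_{s_0}$, so the induction hypothesis makes $(r',s_0)$ definably reachable from $(p',s_0)$ along a chain contained in $(B'\setminus X_{s_0})\times\{s_0\}\subseteq B\setminus X$; likewise $(q',u_0)$ is definably reachable from $(r',u_0)$ inside $(B'\setminus X_{u_0})\times\{u_0\}$; and the segment from $(r',s_0)$ to $(r',u_0)$ lies in $\{r'\}\times I\subseteq B\setminus X$. Concatenating the chain
\[ p\ \rightsquigarrow\ (p',s_0)\ \rightsquigarrow\ (r',s_0)\ \rightsquigarrow\ (r',u_0)\ \rightsquigarrow\ (q',u_0)\ \rightsquigarrow\ q \]
exhibits $q$ as definably reachable from $p$ inside $B\setminus X$, completing the induction.

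I expect the essential difficulty to be the borderline fibres, namely the heights $t$ with $\dim X_t=n-2$, for which the induction hypothesis does not apply to the slice $B'\setminus X_t$; the fibre-dimension inequality is exactly what confines these heights to a set $D$ of dimension $0$, and the purpose of the first and last vertical segments above is to move $p$ and $q$ off their own (possibly $X$-filled) vertical fibres up to good heights $s_0,u_0\notin D$ before performing the horizontal transport supplied by the induction hypothesis. Everything else should be routine: that each segment and each lifted chain is definable, continuous, and parametrised by a nonempty bounded closed interval, and that a concatenation of reachability chains is again a reachability chain, uses only that $\mathcal M$ expands an ordered group.
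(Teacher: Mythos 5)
Your argument is correct, and it differs from the paper's proof in one essential step. Both proofs run the same induction on $n$ via Lemma~\ref{lem:reachable}, slice $X$ by the last coordinate, and use the fibre-dimension statement of \cite[Proposition 2.8(11)]{FKK} to confine the ``bad'' heights (where the slice has dimension $n-2$) to a set of dimension $0$, so that both endpoints can be slid vertically to good heights where the induction hypothesis applies. The divergence is in how the two slices are then joined: the paper reduces to the case $\pi(y)\notin\pi(X)$ by invoking the curve selection lemma \cite[Corollary 2.9]{Fuji} to push $y$ horizontally off $\pi(X)$, and then uses a single horizontal transport at one good height; you instead observe that $\dim\pi'(X)<n-1$ forces the existence of a point $r'\in B'\setminus\pi'(X)$ whose entire vertical fibre $\{r'\}\times I$ misses $X$, and you route the path through that clean fibre, applying the induction hypothesis at the two good heights $s_0$ and $u_0$. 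Your version is more elementary --- it avoids curve selection altogether and uses only the facts that projections do not raise dimension and that sets of dimension $<n-1$ in $M^{n-1}$ have empty interior --- at the modest cost of two horizontal transports instead of one; the paper's curve-selection step is the part singled out in Remark~\ref{rem:connected} as needing revision in more general settings, so your bridge-fibre trick would also simplify that remark. One small point of phrasing: ``the connected component $J$ of $s$'' should not be taken in the topological sense (in a non-Dedekind-complete order, intervals are totally disconnected, so topological components can be singletons); define $J$ instead as the open interval whose endpoints are $\sup\{t\in I: t<s,\ (p',t)\in X\}$ and $\inf\{t\in I: t>s,\ (p',t)\in X\}$ (with the endpoints of $I$ as defaults), which is definable by definable completeness and avoids $X$ because $\{t\in I:(p',t)\in X\}$ is closed in $I$; alternatively, simply choose $s_0$ close to $s$ inside an open box around $(p',s)$ contained in $B\setminus X$, as the paper does.
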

\begin{proof}
	We show the following claim:
	The proposition follows from the claim by Lemma \ref{lem:reachable}.
	\medskip
	
	\textbf{Claim.}
	We take arbitrary two points $x=(x_1,\ldots, x_n),y=(y_1,\ldots, y_n) \in B \setminus X$.
	The point $y$ is definably reachable from $x$ in $B \setminus X$.
	\medskip
	
	We prove the claim by induction on $n$.
	The proposition is obvious for $n=1$ because $X$ is empty and $B$ is an open interval.
	We consider the case $n>1$.
	Set $X_t:=\{z \in M^{n-1}\;|\; (z,t) \in X\}$ for each $t \in M$.
	We first reduce to the case in which $\dim X_{x_n} < n-2$.
	Consider the definable subset $A$ of $M$ defined by $A:=\{t \in M\;|\; \dim X_t = n-2\}$.
	Since $\dim X \leq n-2$, we have $\dim A=0$ by \cite[Proposition 2.8(11)]{FKK}.
	We can find $x'_n \in M$ sufficiently close to $x_n$ such that $x'_n> x_n$ and $x'_n \notin A$ because $A$ has an empty interior.
	Since $U \setminus X$ is open and $x'_n$ is sufficiently close to $x_n$, the point $(x_1,\ldots, x_{n-1},t)$ belongs to $U \setminus X$ for every $x_n \leq t \leq x'_n$.
	The point $(x_1,\ldots, x_{n-1},x'_n)$ is reachable from $x=(x_1,\ldots, x_{n-1},x_n)$ in $U \setminus X$.
	We may assume that $\dim X_{x_n}<n-2$ by replacing $x$ with $(x_1,\ldots, x_{n-1},x'_n)$ if necessary.
	
	Let $\pi:M^n \to M^{n-1}$ be the projection forgetting the last coordinate. 
	We next reduce to the case in which $\pi(y) \not\in \pi(X)$.
	We have nothing to prove when $\pi(y) \not\in \pi(X)$.
	Consider the case in which $\pi(y) \in \pi(X)$.
	We have $\dim \pi(X)<n-1$ by \cite[Proposition 2.6(6)]{FKK}.
	Any open box containing the point $\pi(y)$ is not contained in $\pi(X)$.
	In other words, $\pi(y)$ belongs to $\partial (\pi(U) \setminus \pi(X))$.
	By the curve selection lemma \cite[Corollary 2.9]{Fuji}, there exist $\varepsilon>0$ and a definable continuous map $\delta:(0,\varepsilon) \to \pi(U)$ such that  $\lim_{t \to +0} \delta(t)=\pi(y)$ and $\delta(t) \in \pi(U) \setminus \pi(X)$ for every $0<t<\varepsilon$. 
	Take $0<d<\varepsilon$.
	Consider the definable map $\eta:[0,d] \to U$ given by $\eta(0)=\pi(y)$ and $\eta(t)=\delta(t)$ for $0<t \leq d$.
	The map $\eta$ is continuous.
	We consider the map $\gamma:[0,d] \to U$ given by $\gamma(t)=(\eta(t),y_n)$.
	Note that $\gamma(t) \notin X$ for $0<t \leq d$ because $\eta(t) \notin \pi(X)$.
	The point $\gamma(d)$ is definably reachable from $y$ in $U \setminus X$.
	We may assume that $\pi(y) \notin \pi(X)$ by considering $\gamma(d)$ instead of $y$.
	
	Since $\pi(y) \notin \pi(X)$, the segment connecting $y$ with $z:=(y_1,\ldots, y_{n-1},x_n) \in U$ does not intersect with $X$.
	The point $(y_1,\ldots,y_{n-1})$ is reachable from $(x_1,\ldots,x_{n-1})$ in $U_{x_n} \setminus X_{x_n}$ by the induction hypothesis.
	This implies that $z$ is definably reachable from $x$ in $U \setminus X$.
	We have shown that $y$ is reachable from $x$ in $U \setminus X$.
\end{proof}

\begin{remark}\label{rem:connected}
	L. van den Dries proposed properties which are expected to be possessed by a `tame' dimension function in \cite[Definition]{vdD-dim}.
	Proposition \ref{prop:connected}  holds under the assumption that the dimension function $\dim$ enjoys these properties without assuming local o-minimality when the structure is a definably complete expansion of an ordered field.
\end{remark}
\begin{proof}
	In the proof of Proposition \ref{prop:connected}, we reduce to the case in which $\pi(y) \notin \pi(X)$.
	This part of the proof should be revised, but revision is not required for the other part of the proof because the properties of dimension function found in \cite[Section 1]{vdD-dim} are only used in the remaining part.
	
	We reduce to the case in which $\pi(y) \notin \pi(X)$.
	We consider the unit sphere $S^{n-1}=\{u=(u_1,\ldots,u_n) \in M^n\;|\; \sum_{i=1}^n u_i^2=1\}$.
	Observe that $\dim S^{n-1}=n-1$.
	Define the definable map $\kappa: X \to S^{n-1}$ by $\kappa(u)=(u-y)/\|u-y\|$, where $\|\cdot \|$ denotes the Euclidean norm in $M^n$.
	The map $\kappa$ is well-defined because $y \notin X$.
	We have $\dim \kappa(X) \leq n-2$ by \cite[Corollary 1.5(ii)]{vdD-dim}.
	In particular, we can find $v \in S^{n-1} \setminus \kappa(X)$.
	We can reduce to the case $\pi(y) \notin \pi(X)$ by a linear change of coordinate so that the vector $v$ coincides with the unit vector $(0,\ldots,0,1)$.
\end{proof}

We use the following corollary in the proof of Theorem \ref{thm:open_map}.
\begin{corollary}\label{cor:separate}
	Let $B$ be an open box in $M^n$ and $W$ be a definable open subset of $B$ such that $B \not\subseteq \mycl(W)$.
	Then the equality $\dim (B \cap \partial W)=n-1$ holds.
\end{corollary}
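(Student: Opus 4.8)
The plan is to prove the two inequalities $\dim(B \cap \partial W) \le n-1$ and $\dim(B \cap \partial W) \ge n-1$ separately. The upper bound is soft: every point of $\partial W = \mycl(W) \setminus W$ lies in $\mycl(W)$, so each open box around such a point meets $W$ and hence is not contained in $\partial W$; thus $\partial W$ has empty interior in $M^n$, and since a definable subset of $M^n$ has dimension $n$ exactly when it has nonempty interior, we get $\dim(B \cap \partial W) \le \dim \partial W \le n-1$.

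For the lower bound I would argue by contradiction using Proposition \ref{prop:connected}. Suppose $\dim(B \cap \partial W) < n-1$ and set $X := B \cap \partial W$. Since $W$ is open in the open box $B$, it is open in $M^n$, so $\partial W = \mycl(W) \cap (M^n \setminus W)$ is closed in $M^n$; hence $X$ is a definable closed subset of $B$ with $\dim X < n-1$, and Proposition \ref{prop:connected} gives that $B \setminus X$ is definably connected. On the other hand, because $W \subseteq B$ one has $B \setminus X = W \cup (B \setminus \mycl(W))$, and the two sets on the right are disjoint (as $W \subseteq \mycl(W)$), definable, and open in $B$ — the first by hypothesis, the second because $\mycl(W)$ is closed. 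Both are nonempty: $B \setminus \mycl(W) \ne \emptyset$ is exactly the hypothesis $B \not\subseteq \mycl(W)$, and $W \ne \emptyset$, which we may assume. This displays $B \setminus X$ as a disjoint union of two nonempty definable open subsets of itself, contradicting definable connectedness. Therefore $\dim(B \cap \partial W) \ge n-1$, and combined with the upper bound this yields the equality.

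I do not anticipate a genuine obstacle, as the argument is short once Proposition \ref{prop:connected} is available. The only steps calling for a bit of care are the inference from ``empty interior'' to ``dimension at most $n-1$'' in the upper bound, and the elementary bookkeeping identifying $B \setminus X$ with the disjoint union $W \sqcup (B \setminus \mycl(W))$ of two nonempty definable open sets.
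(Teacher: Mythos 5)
Your argument is correct and is essentially the paper's own proof: the lower bound is obtained exactly as in the paper, by noting that $B\cap\partial W$ is closed in $B$ and that if its dimension were $<n-1$, Proposition~\ref{prop:connected} would be contradicted by the decomposition of $B\setminus(B\cap\partial W)$ into the disjoint nonempty definable open sets $W$ and $B\setminus\mycl(W)$ (nonemptiness of $W$ being implicitly assumed in the paper as well, and satisfied where the corollary is applied). Your explicit upper-bound step, via the empty interior of $\partial W$, is a harmless addition that the paper leaves implicit.
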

\begin{proof}
	Assume for contradiction that $\dim (B \cap \partial W)<n-1$.
	Observe that $B \cap \partial W$ is closed in $B$.
	The definable set $B \setminus \partial W$ is decomposed into two disjoint definable open sets $W$ and $B \setminus \mycl(W)$.
	This contradicts Proposition \ref{prop:connected}. 
\end{proof}

We are now ready to prove Theorem \ref{thm:open_map}.

\begin{proof}[Proof of Theorem \ref{thm:open_map}]
	Let $U$ be an open subset of $V$.
	We may assume that $U$ is a bounded open box such that $\mycl(U) \subseteq V$ without loss of generality because any open subset of $V$ is a union of possibly infinitely many such open boxes.
	Set $$H=f(U) \cap \mybd(f(U))$$ as in the proof of \cite{Johns}.
	Observe that $\dim H \leq n-1$ because $H$ has an empty interior. 
	We have only to show that $H=\emptyset$.
	Assume for contradiction that $H \neq \emptyset$.
	
	We first show that $H$ is an open subset of $\mybd(f(U))$.
	We have $f(\mycl(U)) \subseteq \mycl(f(U))$ because $f$ is continuous.
	We prove the opposite inclusion.
	$f(\mycl(U))$ is closed and bounded by \cite[Proposition 1.10]{Miller} because $\mycl(U)$ is closed and bounded.
	We have $\mycl(f(U)) \subseteq f(\mycl(U))$ because $f(\mycl(U))$ is closed.
	We have proven that $f(\mycl(U)) = \mycl( f(U))$.
	We get $\partial f(U) =\mycl(f(U)) \setminus f(U)=  f(\mycl(U)) \setminus f(U)=f(\partial U)$ because $f$ is injective.
	Observe that $f(\partial U)$ is closed and bounded by \cite[Proposition 1.10]{Miller} for the same reason.
	This implies that $\partial f(U)$ is closed.
	The set $H$ is open in $\mybd(f(U))$ because of the equality $H= \mybd(f(U)) \setminus \partial f(U)$.
	
	We show that $\dim H=n-1$.
	Fix a point $b \in H$.
	Since $H$ is open in $\mybd(f(U))$, we can take an open box $B$ such that $b \in B$ and $B \cap \mybd(f(U)) \subseteq H$.
	Set $W:=B \cap \myint(f(U))$ and we prove that $W$ is not an empty set.
	By the definition of $\mybd(f(U))$, $B$ has a nonempty intersection with $f(U)$ because $b \in \mybd(f(U))$.
	$f^{-1}(B) \cap U$ is a nonempty definable open set because $f$ is continuous.
	In particular, we have $\dim (f^{-1}(B) \cap U)=n$.
	We have $\dim (B \cap f(U))=\dim f(f^{-1}(B) \cap U))=n$ by \cite[Proposition 2.8(6)]{FKK} because $f$ is injective.
	This means that $B \cap f(U)$ has a nonempty interior.
	We have proven that $W=B \cap \myint(f(U)) \neq \emptyset$.
	
	Assume for contradiction that $B \subseteq \mycl(W)$.
	We have $B \subseteq \mycl(W) \subseteq \mycl(f(U))$ by the definition of $W$.
	We get $B \subseteq f(U)$ because $B \cap \mybd(f(U)) \subseteq H \subseteq f(U)$.
	In particular, the point $b$ belongs to $\myint(f(U))$, which contradicts to the definition of $H$ and the assumption $b \in H$.
	We have shown that $B \not\subseteq \mycl(W)$.
	We get $\dim (B \cap \partial W)=n-1$ by applying Corollary \ref{cor:separate} to $W$.
	Observe that $B \cap \partial W \subseteq B \cap \mybd(f(U)) \subseteq H$.
	Recall that $\dim H \leq n-1$.
	The equality $\dim H=n-1$ follows from these two facts.
	
	Set $D=f^{-1}(H)$.
	Observe that $D \subseteq U$ and $\dim D=n-1$ by \cite[Propositon 2.8(6)]{FKK} because $f$ is injective.
	Let $\pi:M^n \to M^{n-1}$ be the coordinate projection forgetting the last coordinate.
	After permuting the coordinates if necessary, we can choose
	\begin{itemize}
		\item a definable open subset $C$ of $M^{n-1}$ contained in $\pi(D)$,
		\item a definable open subset $O$ of $M^n$ and 
		\item a definable continuous map $g:C \rightarrow D$
	\end{itemize}
	such that 
	\begin{itemize}
		\item $\pi(O)=C$, 
		\item $D \cap O = g(C)$ and 
		\item the composition $\pi \circ g$ is the identity map on $C$  
	\end{itemize}
	by \cite[Lemma 3.5]{Fuji2}.
	In \cite[Lemma 3.5]{Fuji2}, the structure $\mathcal M$ is assumed to possess a property called the property (a), but every definably complete locally o-minimal structure possesses the property (a) by \cite[Theorem 2.5]{FKK} and this assumption can be omitted.
	Observe that $C \subseteq \pi(D) \subseteq \pi(U)$.
	We may assume that $O \subseteq U$ by replacing $O$ with $O \cap U$.
	$D \cap O$ is the graph of a definable continuous function $h:C \to M$.
	Set $O_1=\{(x,y) \in (C \times M) \cap O\;|\; y>h(x)\}$ and $O_2=\{(x,y) \in (C \times M) \cap O\;|\; y<h(x)\}$.
	We obviously have $D \cap O \subseteq \partial O_1 \cap \partial O_2$ and $\dim (D \cap O)=n-1$.
	Set $$H'=f(D \cap O)=f(f^{-1}(H) \cap O).$$
	We have $\dim H'=n-1$ by \cite[Propositon 2.8(6)]{FKK} and the injectivity of $f$.
	We get $$H' \subseteq \partial f(O_1) \cap \partial f(O_2)$$ because $f$ is continuous and injective.
	
	We have $\dim \mybd (f(O_i)) \leq n-1$ for $i=1,2$ because $\mybd( f(O_i))$ has an empty interior.
	We have $\dim \mybd( f(O_i))=n-1$ because $H'$ is contained in $\mybd(f(O_i))$.
	We have $\dim(\mybd( f(O_1)) \cup \mybd(f(O_2)))=n-1$ by \cite[Proposition 2.8(5)]{FKK}.
	We apply \cite[Lemma 3.5]{Fuji2} to $H'$ and $ \mybd( f(O_1)) \cup \mybd(f(O_2))$.
	After permuting the coordinates if necessary, we can choose
	\begin{itemize}
		\item a definable open subset $C'$ of $M^{n-1}$ contained in $\pi(H')$,
		\item a definable open subset $O'$ of $M^n$ and 
		\item a definable continuous map $g':C' \rightarrow H'$
	\end{itemize}
	such that 
	\begin{itemize}
		\item $\pi(O')=C'$, 
		\item $(\mybd( f(O_1)) \cup \mybd(f(O_2))) \cap O' = g'(C')$ and 
		\item the composition $\pi \circ g'$ is the identity map on $C'$.
	\end{itemize}
	The equality $(\mybd( f(O_1)) \cup \mybd(f(O_2))) \cap O' = H' \cap O'$ is obvious because of the relations $(\mybd( f(O_1)) \cup \mybd(f(O_2))) \cap O' = g'(C')$, $g'(C') \subseteq H'$ and $H' \subseteq \mybd( f(O_1)) \cup \mybd(f(O_2))$.
	In addition, we have 
	\begin{align*}
		H' \cap O'= (\partial (f(O_1)) \cap \partial(f(O_2))) \cap O' = (\mybd(f(O_1)) \cup \mybd(f(O_2))) \cap O'
	\end{align*}
	because $H' \subseteq \partial (f(O_1)) \cap \partial(f(O_2)) \subseteq \mybd(f(O_1)) \cup \mybd(f(O_2))$.
	Using these equalities, we get $$H' \cap O'=\partial(f(O_i)) \cap O'=\mybd(f(O_i)) \cap O'$$ for $i=1,2$.
	There exists a definable continuous function $h':C' \to M$ such that $H' \cap O'$ is the graph of $h'$.
	Set $O'_1=\{(x,y) \in (C' \times M) \cap O'\;|\; y>h'(x)\}$ and $O'_2=\{(x,y) \in (C' \times M) \cap O'\;|\; y<h'(x)\}$.
	Observe that $O' \setminus H'=O'_1 \cup O'_2$.
	By the definable choice lemma \cite[Lemma 2.8]{Fuji}, we can find definable functions $h_i:C' \to M$ such that $(x,h_i(x)) \in O_i$ for each $x \in C'$ and $i=1,2$.
	Using \cite[Proposition 2.8(5),(7)]{FKK}, we may assume that $C'$ is an open box and $h_i$ are continuous on $C'$ for $i=1,2$ by taking a small open box contained in $C'$ in stead of $C'$ if necessary.
	We may assume that $O'$ is of the form $\{(x,y) \in C \times M\;|\; h_2(x)<y<h_1(x)\}$ by replacing $O'$ with $\{(x,y) \in C \times M\;|\; h_2(x)<y<h_1(x)\}$.
	In particular, the definable sets $O'_1$ and $O'_2$ are definably connected.
	We have succeeded in reducing to the cases in which $O'_1$ and $O'_2$ are definably connected.
	
	The open set $O'\setminus H'$ intersects with $f(O_1)$ because $H' \subseteq \partial (f(O_1))$ and $O'$ is a neighborhood of $H' \cap O'$.
	We get $O' \cap (f(O_1) \setminus \myint(f(O_1))) \subseteq H'$ because $\mybd(f(O_1)) \cap O'=H' \cap O'$.
	Therefore, at least one of $O'_1$ and $O'_2$ intersects with $\myint(f(O_1))$ because $O' \setminus H'=O'_1 \cup O'_2$.
	Assume that $O'_1$ intersects with $\myint(f(O_1))$ without loss of generality.
	We can treat the remaining case similarly.
	We have $O'_1 \cap \mybd(f(O_1))=\emptyset$ because $\mybd(f(O_1)) \cap O'=H' \cap O'$.
	We have $O'_1 \subseteq f(O_1)$.
	Otherwise, the definably connected definable set $O'_1$ is decomposed into  two disjoint nonempty definable open sets $O'_1 \cap \myint(f(O_1))$ and $O'_1 \setminus \mycl(f(O_1))$, which is absurd.
	
	At least one of $ O'_1$ and $O'_2$ is contained in $f(O_2)$ for the same reason.
	We have $f(O_1) \cap f(O_2)= \emptyset$ because $O_1 \cap O_2=\emptyset$ and $f$ is injective.
	In addition, $f(O_2) \cap O' \neq \emptyset$ because $\partial f(O_2) \cap O'=H' \cap O' \neq \emptyset$ and $O'$ is open.
	These imply that the definable set $O'_2$ is contained in $f(O_2)$.  
	In summary, we have shown that $O'$ is contained in $f(U)$.
	Take $c' \in C'$ and set $b'=g'(c')$.
	We have $b' \in \myint(f(U))$ because $b'$ is contained in $O'$.
	On the other hand, we have $b' \in H'=f(f^{-1}(H) \cap O) \subseteq H \subseteq \mybd(f(U))$, which is absurd.
	We have finally proven that $H$ is empty.
	This means that $f(U)$ is open; and consequently, $f$ is an open map.
\end{proof}

In Theorem \ref{thm:open_map}, we assume that the domain of $f$ is a definable open set and the codomain is $M^n$.
We can easily extend our main theorem to the definable maps between locally definable manifolds.
A \textit{locally definable manifold} of dimension $n$ is a triple $(S,(U_i,\theta_i)_{i \leq \kappa})$ where:
\begin{itemize}
		\item $S$ is a set and $(U_i)_{i \leq \kappa}$ is a  cover of $S$;
		\item each $\theta_i:U_i \to M^n$ is an injection such that $\theta(U_i)$ is a definable open subset of $M^n$;
		\item for all $i,j$, $\theta_i(U_i\cap U_j)$ is a definable open subset of $\theta_i(U_i)$ and the transition maps $\theta_{ij}:\theta_i(U_i\cap U_j) \to \theta_j(U_i\cap U_j)$ given by $\theta_{ij}(x)=\theta_j(\theta_i^{-1}(x))$ are definable homeomorphisms.
\end{itemize}
We call the $(U_i,\theta_i)$'s the \textit{definable atlas} of $S$.
We say that $S$ is a locally definable manifold by abuse of terminology if there exists the family satisfying the above conditions $(U_i,\theta_i)_{i \leq \kappa}$.
A locally definable manifold $S$ is equipped with the topology so that every $\theta_i$ is a homeomorphism onto its image.  

We recall another notion.
Let $f:X \to S$ be a map from a topological space $X$ to a set $S$.
We say that it is \textit{locally injective} if, for any $x \in X$, there exists an open neighborhood $U$ of $x$ in $X$ such that the restriction of $f$ to $U$ is injective. 

We  generalize Theorem \ref{thm:open_map} as follows:
\begin{corollary}\label{cor:open_map1}
	Let $X$ and $Y$ be locally definable manifolds of dimension $d$ and $f:X \to Y$ be a definable, continuous and locally injective map.
	Then, $f$ is an open map.
\end{corollary}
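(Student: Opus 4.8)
The plan is to deduce Corollary \ref{cor:open_map1} from Theorem \ref{thm:open_map} by a purely local argument, using that openness of a map is a local property on the source. Fix definable atlases $(U_i,\theta_i)_{i\leq\kappa}$ of $X$ and $(V_j,\psi_j)_{j\leq\lambda}$ of $Y$. The first step is to reduce to the following local statement: for every point $x\in X$ there is an open neighborhood $W$ of $x$ in $X$ such that $f(W')$ is open in $Y$ for every open subset $W'$ of $W$. Granting this, choose such a neighborhood $W_x\ni x$ for each $x\in X$; then for an arbitrary open $O\subseteq X$ we have $O=\bigcup_{x\in O}(O\cap W_x)$, hence $f(O)=\bigcup_{x\in O}f(O\cap W_x)$ is a union of sets that are open in $Y$, and therefore open. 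Here we also use that each $V_j$ is open in $Y$, and hence every open subset of a $V_j$ is open in $Y$, which is immediate from the definition of the topology on a locally definable manifold.

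The second step constructs the neighborhood $W$. Given $x\in X$, pick $i$ with $x\in U_i$ and $j$ with $f(x)\in V_j$, and let $N$ be an open neighborhood of $x$ on which $f$ is injective, which exists since $f$ is locally injective. The set $\theta_i\bigl(U_i\cap f^{-1}(V_j)\cap N\bigr)$ is an open neighborhood of $\theta_i(x)$ in $M^d$, so it contains a definable open box $B$ with $\theta_i(x)\in B$; put $W=\theta_i^{-1}(B)$. Then $W$ is open in $X$, contained in $U_i$, satisfies $f(W)\subseteq V_j$, and $f|_W$ is injective. Consider
$$\varphi:=\psi_j\circ f\circ(\theta_i|_W)^{-1}:B\longrightarrow M^d.$$
This map is definable by the definition of a definable map between locally definable manifolds, together with the fact that the transition maps of the atlases are definable homeomorphisms; it is continuous because $\theta_i$ and $\psi_j$ are homeomorphisms onto their images and $f$ is continuous; and it is injective because $f|_W$ and $\psi_j$ are injective. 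Since $B$ is a definable open subset of $M^d$, Theorem \ref{thm:open_map} applies and shows that $\varphi$ is an open map.

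The third step checks the local statement. Let $W'\subseteq W$ be open in $X$. Then $\theta_i(W')$ is open in $B$, so $\varphi(\theta_i(W'))=\psi_j(f(W'))$ is open in $M^d$, so $f(W')$ is open in $V_j$ because $\psi_j$ is a homeomorphism onto its image, and therefore $f(W')$ is open in $Y$. This completes the reduction. I do not expect a genuine obstacle here, as all the mathematical content is carried by Theorem \ref{thm:open_map}; the only point requiring care is the bookkeeping needed to arrange that the sets involved are simultaneously definable, contained in a single source chart, and mapped into a single target chart, and in particular to see that the chart representation $\varphi$ is definable --- this is where the hypothesis that $f$ is a \emph{definable} map between locally definable manifolds is used.
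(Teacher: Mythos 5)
Your proposal is correct and follows essentially the same route as the paper: localize to charts around $x$ and $f(x)$, shrink so that $f$ is injective there and maps into the target chart, and apply Theorem \ref{thm:open_map} to the chart representation $\psi_j\circ f\circ(\theta_i|_W)^{-1}$. The only difference is cosmetic bookkeeping (you verify openness of images of all open subsets of a small chart neighborhood and then take unions, while the paper directly exhibits an open neighborhood of each point of $f(U)$ inside $f(U)$).
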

\begin{proof}
	Let $U$ be an open subset of $X$.
	Take an arbitrary point $y_0 \in f(U)$.
	We have only to show that there exists an open neighborhood $V$ of $y_0$ in $Y$  contained in $f(U)$.
	We choose a point $x_0 \in X$ such that $f(x_0)=y_0$.
	
	We first choose a definable open neighborhood $W_1$ of $y_0$ in $Y$, a definable open neighborhood $W_2$ of the origin in $M^d$  and a definable homeomorphism $\psi:W_1 \to W_2$ such that $\psi(y_0)$ is the origin.
	It is possible because $Y$ is a locally definable manifold.
	For the same reason, we can take a definable open neighborhood $N_1$ of $x_0$ in $X$, a definable open neighborhood $N_2$ of the origin in $M^d$  and a definable homeomorphism $\varphi:N_1 \to N_2$ such that $\varphi(x_0)$ is the origin.
	We may assume that $N_1 \subseteq U \cap f^{-1}(W_1)$ by shrinking $N_1$ if necessary.
	Furthermore, we may assume that the restriction of $f$ to $N_1$ is injective by shrinking $N_1$ again because $f$ is locally injective.
	Set $V=f(N_1)$.
	We show that $V$ is a desired neighborhood.
	
	The definable map $\psi \circ f \circ \varphi^{-1}:N_2 \to M^d$ is continuous and injective.
	It is an open map by Theorem \ref{thm:open_map}.
	This implies that $\psi(V)=\psi(f(N_1))=\psi(f(\varphi^{-1}(N_2)))$ is open in $M^d$.
	Since $W_2$ is open in $M^d$, the set $\psi(V)$ is open in $W_2$.
	The set $V=\psi^{-1}(\psi(V))$ is contained and open in $W_1$, and it is also open in $Y$.
	The open set $V$ is an open neighborhood of $y_0$ in $Y$ contained in $f(U)$.
\end{proof}

\end{document}